\newtheorem{theorem}{Theorem}[section]
\newtheorem{lemma}[theorem]{Lemma}
\newtheorem{proposition}[theorem]{Proposition}
\newtheorem{corollary}[theorem]{Corollary}
\title{On the Difference between Consecutive Primes and Estimates of the Number of Primes in the Interval $(n, 2n)$}
\author{Felix Sidokhine}
\begin{document}
\maketitle

\begin{abstract}
\noindent Using evaluations of the difference between consecutive primes we develop another way of estimating of the number of primes in the interval $(n, 2n)$. We also discuss the ultra Cramer conjecture, $p_{n+1} - p_n = O(log^{1+\epsilon}p_n)$ where $\epsilon > 0$, in the context of the results we have obtained in our paper.
\end{abstract}

\section{Introduction}
The difference between consecutive primes is an important characteristic of the distribution of the prime numbers \cite{Granville:2010aa}. However, as it will be illustrated, it is also closely linked with the estimates of the number of primes in the interval $(n, 2n)$. Using the best available evaluation $p_{n+1} - p_n = O(p_n^{0.525})$ \cite{Baker:2001aa} and also the hypothetical evaluations of the difference between consecutive primes $p_{n+1}- p_n = O(\sqrt{p_n}), p_{n+1}- p_n = O(\ln^2(p_n))$ \cite{Cramer:1936aa} we develop another way evaluating the number of primes in the interval $(n, 2n)$.

\section{Estimates of the number of primes in the interval $(n, 2n)$}

There is the well-known estimate for the number of the prime numbers in an interval $(n, 2n)$\cite{Trost:1968aa}:
\begin{equation}
\frac{1}{3}\frac{n}{\log(2n)} < \pi(2n) - \pi(n) < \frac{7}{5}\frac{n}{\log(n)} \text{ where } n > 1
\end{equation}

%\noindent It would be interesting to obtain the lower estimates (left) for the number of prime numbers in the interval $(n, 2n)$ by the available and also expected evaluations of the difference between consecutive primes and compare them to  a given lower bound.

\noindent The left side is a lower bound for the number of primes within $(n,2n)$, and the right side is the upper bound.

\begin{proposition}\label{prop_a_1}
Let $k$, $N_k$ be such that for every prime $p_m \geq N_k$, $\sqrt{p_m} - \sqrt{p_{m-1}} < \frac{1}{k}$ is true. Then the interval $(n, 2n)$ contains no less than $[(\frac{k}{4})\sqrt{2n}]$ primes for every $n > N_k$. 
\end{proposition}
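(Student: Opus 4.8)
The plan is to read the hypothesis as a density statement about the points $\sqrt{p_m}$: beyond $\sqrt{N_k}$ the numbers $\sqrt{p_m}$ form an increasing sequence whose consecutive gaps are all smaller than $1/k$, so any stretch of the square-root axis lying above $\sqrt{N_k}$ must contain many of them. Concretely, fix $n > N_k$, let $p_a$ be the least prime exceeding $n$ and $p_b$ the greatest prime below $2n$, and set $r := \pi(2n)-\pi(n) = b-a+1$, the quantity to be bounded. Every prime in $(n,2n)$ exceeds $n > N_k$, so the hypothesis $\sqrt{p_m}-\sqrt{p_{m-1}} < 1/k$ is available at each index we shall use.

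First I would telescope over the primes in the interval,
\[
\sqrt{p_b}-\sqrt{p_a} \;=\; \sum_{m=a+1}^{b}\bigl(\sqrt{p_m}-\sqrt{p_{m-1}}\bigr) \;<\; \frac{b-a}{k} \;=\; \frac{r-1}{k},
\]
which rearranges to $r > 1 + k\bigl(\sqrt{p_b}-\sqrt{p_a}\bigr)$ and reduces the problem to a lower bound for the span $\sqrt{p_b}-\sqrt{p_a}$. To pin the endpoints to $\sqrt n$ and $\sqrt{2n}$, I would use that $p_{a-1}\le n$ together with the gap hypothesis at $p_a$ to get $\sqrt{p_a} < \sqrt{p_{a-1}}+1/k \le \sqrt n + 1/k$, and symmetrically that $p_{b+1}\ge 2n$ with the hypothesis at $p_{b+1}$ gives $\sqrt{p_b} > \sqrt{p_{b+1}}-1/k \ge \sqrt{2n}-1/k$. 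Subtracting yields $\sqrt{p_b}-\sqrt{p_a} > (\sqrt{2n}-\sqrt n)-2/k$, and substituting back gives the clean intermediate bound $r > k\bigl(\sqrt{2n}-\sqrt n\bigr) - 1$.

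It then remains to convert this into the advertised form. Writing $\sqrt{2n}-\sqrt n = (1-\tfrac{1}{\sqrt2})\sqrt{2n}$ and using the elementary inequality $1-\tfrac{1}{\sqrt2} > \tfrac14$ (equivalently $\tfrac34 > \tfrac{1}{\sqrt2}$), one has $k\bigl(\sqrt{2n}-\sqrt n\bigr) > \tfrac{k}{4}\sqrt{2n}$, so that $r$, being an integer, satisfies $r \ge \lfloor \tfrac{k}{4}\sqrt{2n}\rfloor$, which is the claim. The main obstacle I anticipate is precisely the additive $-1$ left over from the telescoping and the $2/k$ endpoint corrections: the margin between $1-\tfrac{1}{\sqrt2}\approx 0.293$ and $\tfrac14$ is only about $0.043$, so absorbing the $-1$ requires $0.043\,k\sqrt{2n}\ge 1$, i.e.\ $n$ bounded below in terms of $k$. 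I would dispose of this by observing that the hypothesis itself forces $N_k$ to be large — a gap of $2$ between primes near $p$ contributes roughly $1/\sqrt p$ to $\sqrt{p_m}-\sqrt{p_{m-1}}$, so the bound $<1/k$ can hold only once $p \gtrsim k^2$, whence $N_k \gtrsim k^2$ and the needed lower bound on $n$ is automatic; failing that, one simply enlarges $N_k$, which leaves the hypothesis intact and only strengthens the range $n>N_k$.
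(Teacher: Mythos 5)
Your proof is correct, and it takes a genuinely different route from the paper's. The paper proceeds by two nested contradiction arguments: a pigeonhole lemma shows that if $(\frac{p_n}{2},p_n)$ contained fewer than $[\frac{k}{4}\sqrt{p_n}]$ primes then some gap would have length at least $\frac{p_n/2}{[\frac{k}{4}\sqrt{p_n}]}\ge\frac{2\sqrt{p_n}}{k}$, contradicting $p_i-p_{i-1}=(\sqrt{p_i}-\sqrt{p_{i-1}})(\sqrt{p_i}+\sqrt{p_{i-1}})<\frac{2\sqrt{p_i}}{k}$, and a second contradiction argument transfers the count from intervals of the form $(\frac{p_n}{2},p_n)$ to $(n,2n)$. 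You instead work directly on $(n,2n)$ in square-root coordinates and telescope, which removes the intermediate lemma and both contradictions, and it is quantitatively sharper: you obtain $r>k(1-\tfrac{1}{\sqrt2})\sqrt{2n}-1\approx 0.293\,k\sqrt{2n}-1$, whereas the paper's pigeonhole, which bounds every gap by the worst case $\frac{2\sqrt{p_n}}{k}$, only yields the constant $\tfrac14$. Two small repairs. First, your closing worry about ``absorbing the $-1$'' is unnecessary: since $r$ is an integer, $r>\frac{k}{4}\sqrt{2n}-1$ already forces $r\ge\lfloor\frac{k}{4}\sqrt{2n}\rfloor$, so the heuristic $N_k\gtrsim k^2$ and the suggestion to enlarge $N_k$ (which would weaken the statement, since the conclusion is asserted for all $n>N_k$ with the given $N_k$) should simply be deleted. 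Second, the telescoping tacitly assumes $r\ge 1$ so that $p_a$ and $p_b$ exist; to close this, note that if $(n,2n)$ contained no prime, the consecutive primes $q\le n<2n\le q'$ would give $\sqrt{q'}-\sqrt{q}\ge(1-\tfrac{1}{\sqrt2})\sqrt{2n}>\tfrac14\sqrt{2n}\ge\tfrac1k$ whenever $\lfloor\frac{k}{4}\sqrt{2n}\rfloor\ge1$, contradicting the hypothesis at $q'$; and when that floor is $0$ there is nothing to prove.
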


\begin{lemma}\label{lemma_a_1}
Let an integer $k$ be such that the condition of proposition \ref{prop_a_1} is satisfied. Then the interval $(\frac{p_n}{2}, p_n)$, for every $p_n$ where $\frac{p_n}{2} > N_k$, contains no less than $[(\frac{k}{4})\sqrt{p_n}]$ prime numbers.
\end{lemma}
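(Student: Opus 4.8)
The plan is to apply Proposition \ref{prop_a_1} almost verbatim, since Lemma \ref{lemma_a_1} is essentially the same statement with the free variable renamed. Proposition \ref{prop_a_1} asserts that for every $n > N_k$, the interval $(n, 2n)$ contains at least $[(\frac{k}{4})\sqrt{2n}]$ primes. The lemma instead speaks of the interval $(\frac{p_n}{2}, p_n)$, so the natural first step is to set $m = \frac{p_n}{2}$, so that the interval $(m, 2m)$ coincides with $(\frac{p_n}{2}, p_n)$.

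With this substitution, I would verify that the hypothesis of Proposition \ref{prop_a_1} is met: the proposition requires $m > N_k$, and the lemma supplies exactly the condition $\frac{p_n}{2} > N_k$, i.e. $m > N_k$. Hence the proposition applies and guarantees at least $[(\frac{k}{4})\sqrt{2m}]$ primes in $(m, 2m) = (\frac{p_n}{2}, p_n)$. Substituting $2m = p_n$ back in, the count becomes $[(\frac{k}{4})\sqrt{p_n}]$, which is precisely the bound claimed in the lemma.

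The only genuine subtlety, and where I would be most careful, is the endpoint $p_n$ itself. Proposition \ref{prop_a_1} counts primes in the open interval $(m, 2m) = (\frac{p_n}{2}, p_n)$, which does not include $p_n$; since the lemma also writes the open interval $(\frac{p_n}{2}, p_n)$, there is no discrepancy, and $p_n$ is legitimately excluded on both sides. I would just note that $\frac{p_n}{2}$ is not itself prime for $p_n > 2$, so nothing is lost at the left endpoint either.

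In short, the lemma is a direct specialization of Proposition \ref{prop_a_1} obtained by taking $n = \frac{p_n}{2}$, and the main (minor) obstacle is simply confirming that the open-interval conventions and the hypothesis $\frac{p_n}{2} > N_k$ line up correctly so that no boundary prime is miscounted. No new estimation is required beyond what the proposition already provides.
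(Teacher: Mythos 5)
Your proposal is circular in the context of this paper. You derive Lemma \ref{lemma_a_1} by specializing Proposition \ref{prop_a_1} to $n = \frac{p_n}{2}$, treating the proposition as already established. But the paper's logical order is the reverse: Lemma \ref{lemma_a_1} is proved first, directly from the gap hypothesis $\sqrt{p_m} - \sqrt{p_{m-1}} < \frac{1}{k}$, and is then the key ingredient in the proof of Proposition \ref{prop_a_1} (which proceeds by contradiction, locating a prime $p_n$ with $p_{n-1} < 2n_0 < p_n$ and invoking the lemma on $(\frac{p_n}{2}, p_n)$). So you cannot assume the proposition when proving the lemma; doing so makes the entire chain of reasoning depend on itself. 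The ``only genuine subtlety'' you identify (open-interval endpoints) is not the real obstacle --- the real work of the lemma is precisely the estimate your reduction skips.

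What the paper actually does is a pigeonhole argument: assume $(\frac{p_n}{2}, p_n)$ contains fewer than $[(\frac{k}{4})\sqrt{p_n}]$ primes, decompose the interval of length $\frac{p_n}{2}$ into the consecutive subintervals cut out by those primes, and conclude that some gap between consecutive primes must have length at least $\frac{p_n/2}{(\frac{k}{4})\sqrt{p_n}} = \frac{2\sqrt{p_n}}{k}$. On the other hand, the hypothesis gives $p_i - p_{i-1} = (\sqrt{p_i} - \sqrt{p_{i-1}})(\sqrt{p_i} + \sqrt{p_{i-1}}) < \frac{2\sqrt{p_i}}{k} < \frac{2\sqrt{p_n}}{k}$ for the relevant primes, a contradiction. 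If you want a correct proof, you need to supply an argument of this kind from the gap hypothesis itself, not a reduction to the proposition. (A further minor issue with your reduction, even setting aside circularity: $\frac{p_n}{2}$ is not an integer for odd $p_n$, whereas the proposition is intended for integer, or at least independently specified, left endpoints.)
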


\begin{proof}
Let lemma \ref{lemma_a_1} be true. Let $p_n$ be such a prime, $\frac{p_n}{2} > N_k$, that $(\frac{p_n}{2},p_n)$ contains less than $[(\frac{k}{4})\sqrt{p_n}]$ primes. Let $\{p_i\}$ be the set of all primes inside $(\frac{p_n}{2},p_n)$. Let $(\frac{p_n}{2},p_n) = \cup_{j=1}^{|\{p_i\}|} I_j = (\frac{p_n}{2},p_1) \cup (\cup_{j=1}^{|\{p_i\}|} (p_j,p_{j+1}))$. 
\newline
\newline
Some $I_i$ has length no less than $2\frac{\sqrt{p_n}}{k} <\frac{(p_n- \frac{p_n}{2})}{[(\frac{k}{4})\sqrt{p_n}]}< p_i - p_{i-1}$. This contradicts the condition \ref{prop_a_1} since $p_i - p_{i-1}< \frac{2\sqrt{p_i}}{k} < \frac{2\sqrt{p_n}}{k}$. Thus lemma \ref{lemma_a_1} is true.
\end{proof}

\begin{proof}[Proof of proposition \ref{prop_a_1}]
Let the condition of proposition \ref{prop_a_1} be satisfied. Let $n_0 > N_k$ be such that the interval $(n_0, 2n_0)$ contains less than $[(\frac{k}{4})\sqrt{2n_0}]$ primes. Let $p_{n-1}, p_n$ be such that $p_{n-1}<2n_0< p_n$. Then $(\frac{p_n}{2}, p_n)$ contains primes less than $[(\frac{k}{4})\sqrt{p_n}]$. Indeed, the interval $(n_0, p_n) = (n_0, 2n_0)\cup[2n_0]\cup(2n_0, p_n)$ contains primes less than $[(\frac{k}{4})\sqrt{2n_0}]$. Furthermore $(\frac{p_n}{2}, p_n)\subset(n_0, p_n)$ as $\frac{p_n}{2} > n_0$ so $(\frac{p_n}{2}, p_n)$ contains primes less than $[(\frac{k}{4})\sqrt{p_n}]$, contradicting lemma \ref{lemma_a_1}.
\end{proof}

\begin{corollary}[Bertrand's Postulate]\label{cor_a_1}
Let $\sqrt{p_m} - \sqrt{p_{m-1}} < 1$ be satisfied for every integer $m \geq 2$. Then $(n, 2n - 2)$ contains no less than two primes for every integer $n \geq 8$.
\end{corollary}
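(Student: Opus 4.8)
The plan is to specialize Proposition \ref{prop_a_1} to the case $k = 1$. The hypothesis of the corollary, $\sqrt{p_m} - \sqrt{p_{m-1}} < 1$ for every integer $m \geq 2$, is precisely the condition of Proposition \ref{prop_a_1} with $k = 1$ and $N_1$ taken to be a small constant (at most $3$, since the smallest difference compared is $\sqrt{p_2} - \sqrt{p_1}$). The proposition then applies for every $n > N_1$ and yields that the open interval $(n, 2n)$ contains no less than $[\frac{1}{4}\sqrt{2n}]$ primes.

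Next I would pass from $(n, 2n)$ to the slightly shorter interval $(n, 2n-2)$ by bounding how many primes can be lost. Viewed as sets of integers, $(n, 2n)$ and $(n, 2n-2)$ differ only in the two values $2n-2$ and $2n-1$. For $n \geq 8$ the number $2n-2$ is even and at least $14$, hence composite, so at most one prime, namely $2n-1$ when it happens to be prime, lies in the removed part. Therefore the number of primes in $(n, 2n-2)$ is at least $[\frac{1}{4}\sqrt{2n}] - 1$.

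It then remains to determine for which $n$ this lower bound is at least $2$. I would solve $[\frac{1}{4}\sqrt{2n}] - 1 \geq 2$, i.e. $\frac{1}{4}\sqrt{2n} \geq 3$, which is equivalent to $2n \geq 144$, that is $n \geq 72$. Thus for all $n \geq 72$ the conclusion follows immediately by combining the proposition with the one-prime loss estimate above.

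The main obstacle is the residual range $8 \leq n \leq 71$, which the weak constant $\frac{1}{4}$ in Proposition \ref{prop_a_1} fails to cover: already at $n = 71$ the bound only guarantees $[\frac{1}{4}\sqrt{142}] = 2$ primes in $(n, 2n)$, hence just one guaranteed prime in $(n, 2n-2)$. I would close this gap by direct verification, checking for each integer $n$ with $8 \leq n \leq 71$ that $(n, 2n-2)$ contains at least two primes (for instance $(8, 14)$ contains $11$ and $13$, and $(9,16)$ contains $11$ and $13$). Since this is a finite and explicit computation, it completes the proof once combined with the asymptotic range $n \geq 72$.
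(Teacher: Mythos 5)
Your proposal is correct and follows essentially the same route as the paper: specialize Proposition \ref{prop_a_1} to $k=1$ to get at least $[\frac{1}{4}\sqrt{2n}]\geq 3$ primes in $(n,2n)$ for $n\geq 72$, pass to $(n,2n-2)$ by losing at most one prime, and handle $8\leq n\leq 71$ by direct verification. You in fact make explicit the one-prime-loss step from $(n,2n)$ to $(n,2n-2)$ that the paper leaves implicit.
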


\begin{proof}
According to proposition \ref{prop_a_1}, where $k = 1, N_1 = 2$ an interval $(n, 2n)$ contains no less three prime numbers for every integer $n \geq 72$. Corollary \ref{cor_a_1} is true for all values of $n$ no less than $72$; by direct verification we find that it is true for smaller values. Thus corollary \ref{cor_a_1} is true for $n \geq 8$. 
\end{proof}

\begin{corollary}\label{cor_a_2}
Let $\sqrt{p_m} - \sqrt{p_{m-1}} < \frac{1}{2}$ be satisfied for every integer $m \geq 32$. Then an interval $(n, 2n)$ contains no less than $[\frac{1}{2}\sqrt{2n}]$ prime numbers for every integer $n \geq 2$.
\end{corollary}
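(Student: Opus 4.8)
The plan is to treat this as an immediate specialization of Proposition \ref{prop_a_1}, exactly as the proof of Corollary \ref{cor_a_1} specializes it for $k=1$. First I would match the parameters: the hypothesis $\sqrt{p_m}-\sqrt{p_{m-1}} < \frac{1}{2}$ is precisely the condition of Proposition \ref{prop_a_1} with $k=2$, since then $\frac{1}{k}=\frac{1}{2}$. The remaining task at this stage is to pin down the threshold $N_2$. The hypothesis is stated to hold for every index $m\geq 32$; counting primes gives $p_{32}=131$, so the inequality holds for every prime $p_m\geq 131$, and we may take $N_2 = 131$.

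With these parameters fixed, Proposition \ref{prop_a_1} yields directly that $(n,2n)$ contains no fewer than $[(\frac{2}{4})\sqrt{2n}]=[\frac{1}{2}\sqrt{2n}]$ primes for every $n > N_2 = 131$, that is, for all integers $n\geq 132$. This settles the entire asymptotic tail in one step, so no further analytic work is needed there; all of the real content is already carried by the proposition and its supporting lemma.

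What remains is purely the finite range $2\leq n\leq 131$, which I would dispose of by direct verification, mirroring the small-case check at the end of Corollary \ref{cor_a_1}. This is routine rather than difficult: in this range the target count $[\frac{1}{2}\sqrt{2n}]$ never exceeds $8$ (at $n=131$, $\frac{1}{2}\sqrt{262}\approx 8.09$), and it grows very slowly, equalling $1$ for $2\leq n<8$, $2$ for $8\leq n<18$, and so on. Hence for each such $n$ one need only confirm a modest prime count in $(n,2n)$, which the classical density of primes comfortably supplies.

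The main obstacle, such as it is, is bookkeeping rather than mathematics: one must identify $N_2=p_{32}=131$ correctly, since an off-by-one error in the prime index would shift the cut-off between the proposition's range and the finite check, and one must then carry out the tabulation for $n\leq 131$ without error. Once the parameter identification $(k=2,\,N_2=131)$ is in place, the corollary follows from Proposition \ref{prop_a_1} together with this finite verification.
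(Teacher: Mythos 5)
Your proposal matches the paper's proof essentially verbatim: both specialize Proposition \ref{prop_a_1} with $k=2$ and $N_2=131=p_{32}$, then close the finite range $2\le n\le 131$ by direct verification. Your identification of the threshold and the small-case bookkeeping are correct (indeed slightly more explicit than the paper's own two-line argument), so nothing further is needed.
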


\begin{proof}
This is a particular case of proposition \ref{prop_a_1} where $k = 2$, $N_2 = 131$. Since corollary \ref{cor_a_2} is true for all values of $n$ not less than $131$; by direct verification we find that it is true for smaller values. Thus corollary \ref{cor_a_2} is true for any $n \geq 2$.
\end{proof}

\noindent The theorem: ``An interval $(n, 2n)$ contains not less than $[\frac{1}{2}\sqrt{2n}]$ primes for every integer $n \geq 2$" has been proved by H. Karcher using Tschebyschef - Erdos approach \cite{Karcher:2011aa}.
\newline
\newline
The following statement is based on using Cramer's conjecture in the form $p_{n+1}- p_n= O(\ln^2 p_{n+1})$:

\begin{proposition}\label{prop_a_2}
Let Cramer's conjecture be true then there exist such constants $k, N$ such that for every integer $n > N$, an interval $(n, 2n)$ contains no less than $[(\frac{1}{k})\frac{n}{\log^22n}]$ primes.
\end{proposition}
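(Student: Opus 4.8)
The plan is to unpack Cramer's conjecture into an explicit gap bound and then run the same covering/pigeonhole argument that drives Lemma~\ref{lemma_a_1} and Proposition~\ref{prop_a_1}, but with the slowly varying weight $\log^2 x$ in place of $\sqrt{x}$. The hypothesis $p_m - p_{m-1} = O(\log^2 p_m)$ means there exist constants $C > 0$ and $N_0$ with $p_m - p_{m-1} \le C\log^2 p_m$ for every prime $p_m \ge N_0$. I would fix such a pair $(C, N_0)$ at the outset; the constants $k$ and the threshold $N$ in the statement will then be chosen in terms of $C$ and $N_0$.

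First I would prove the counting estimate directly on $(n, 2n)$, the analogue of Lemma~\ref{lemma_a_1}. Let $q_1 < q_2 < \dots < q_r$ be the primes in $(n, 2n)$ and let $q_{r+1}$ be the smallest prime $> 2n$; for $n > N$ all of these exceed $N_0$. Since $\log^2 x$ is increasing, every gap with right endpoint at most $2n$ obeys $q_{i+1}-q_i \le C\log^2 q_{i+1} \le C\log^2(2n)$, and the rightmost gap $q_{r+1}-q_r$ is bounded by $C\log^2 q_{r+1} = C\log^2(2n)(1+o(1))$, because Cramer forces $q_{r+1} = 2n + O(\log^2 2n)$. Writing the interval length as a telescoping sum of these $r+1$ gaps,
\[
n \le q_{r+1}-n = (q_1-n) + \sum_{i=1}^{r-1}(q_{i+1}-q_i) + (q_{r+1}-q_r) \le (r+1)\,C\log^2(2n)\bigl(1+o(1)\bigr).
\]
Solving for $r$ gives $r \ge \dfrac{n}{C\log^2(2n)}(1+o(1)) - 1$, which for any fixed $k > C$ and all sufficiently large $n$ is at least $\bigl[\tfrac{1}{k}\tfrac{n}{\log^2 2n}\bigr]$.

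To stay strictly inside the paper's framework one can instead first prove the $(\tfrac{p}{2}, p)$ version — that $(\tfrac{p}{2}, p)$ contains at least $\tfrac{1}{k}\tfrac{p}{\log^2 p}$ primes, by the identical covering argument over the length $\tfrac{p}{2}$ — and then transfer to $(n, 2n)$ exactly as in the proof of Proposition~\ref{prop_a_1}: take $p$ to be the least prime exceeding $2n$, so $\tfrac{p}{2} > n$ and $(\tfrac{p}{2}, p) \subset (n, p)$, an interval containing no more primes than $(n, 2n)$. One then trades $\tfrac{p}{\log^2 p}$ for $\tfrac{2n}{\log^2 2n}$ up to a constant (again using $p = 2n + O(\log^2 2n)$, hence $\log p = \log(2n)(1+o(1))$) and absorbs the factor $2$ into $k$.

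The step I expect to be the main obstacle is controlling the slow variation of $\log^2 x$, both across the interval and across the final gap up to the next prime past $2n$. Unlike the clean algebraic identity $p_m - p_{m-1} = (\sqrt{p_m}-\sqrt{p_{m-1}})(\sqrt{p_m}+\sqrt{p_{m-1}})$ that powers Proposition~\ref{prop_a_1}, here I must justify bounding every gap by the single maximal value $C\log^2(2n)$ and must verify that replacing $\log^2 q_{r+1}$ (or $\log^2 p$) by $\log^2(2n)$ costs only a $(1+o(1))$ factor. Both are true precisely because Cramer keeps $q_{r+1}-2n$ polylogarithmic, so the endpoint effects are of lower order; they perturb only the constant $k$ and the threshold $N$, leaving the stated form intact. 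The genuine care is therefore in pinning down that these $o(1)$ terms are absorbable for all $n > N$.
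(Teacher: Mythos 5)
Your proposal is correct and rests on the same idea as the paper's proof: bound every prime gap in the interval by $C\log^2(2n)$ and count how many such gaps are needed to cover an interval of length $n$, which is exactly the pigeonhole argument of Lemma~\ref{lemma_a_2} together with the transfer to $(n,2n)$ via the first prime past $2n$ used in the paper's proof of Proposition~\ref{prop_a_2} (your second variant reproduces that route verbatim, and your direct telescoping on $(n,2n)$ is only a cosmetic streamlining of it). Your attention to the $(1+o(1))$ endpoint corrections is in fact slightly more careful than the paper, which silently absorbs them into the constant $\frac{1}{2k}$.
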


\begin{lemma}\label{lemma_a_2}
Let there exist such such integers $k, N$ such that $p_m - p_{m-1}< k \log^2 p_m$ is true for every $p_m\geq N$. Then $(\frac{p_n}{2}, p_n)$ contains no less than $[\frac{1}{2k}\frac{p_n}{\log^2 p_n}]$ primes for every $p_n$ where $\frac{p_n}{2} > N$. 
\end{lemma}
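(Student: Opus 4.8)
The plan is to mirror the contradiction argument used for Lemma \ref{lemma_a_1}, simply replacing the square-root gap bound by the logarithmic one. First I would assume, for contradiction, that for some prime $p_n$ with $\frac{p_n}{2} > N$ the interval $(\frac{p_n}{2}, p_n)$ contains fewer than $[\frac{1}{2k}\frac{p_n}{\log^2 p_n}]$ primes. Writing $\{p_1 < \dots < p_M\}$ for the primes lying strictly inside this interval, so that $M < [\frac{1}{2k}\frac{p_n}{\log^2 p_n}]$, I decompose
\[
\left(\tfrac{p_n}{2}, p_n\right) = \left(\tfrac{p_n}{2}, p_1\right) \cup \bigcup_{j=1}^{M}(p_j, p_{j+1}),
\]
where $p_{M+1} = p_n$. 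This exhibits the interval, of total length $\frac{p_n}{2}$, as a union of $M+1$ open subintervals.

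Next I would invoke the pigeonhole principle: since the $M+1$ subintervals have lengths summing to $\frac{p_n}{2}$, at least one of them, say $I_i$, has length at least $\frac{p_n/2}{M+1}$. Because $M$ and the floor are integers with $M < [\frac{1}{2k}\frac{p_n}{\log^2 p_n}]$, we have $M+1 \leq [\frac{1}{2k}\frac{p_n}{\log^2 p_n}] \leq \frac{1}{2k}\frac{p_n}{\log^2 p_n}$, and hence
\[
\mathrm{length}(I_i) \geq \frac{p_n/2}{M+1} \geq \frac{p_n/2}{\frac{1}{2k}\cdot\frac{p_n}{\log^2 p_n}} = k\log^2 p_n.
\]
Here the floor works in our favour: it shrinks the denominator and so forces the long subinterval to have length at least $k\log^2 p_n$.

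Finally I would derive a contradiction from the hypothesis $p_m - p_{m-1} < k\log^2 p_m$. If $I_i = (p_{i-1}, p_i)$ is a genuine gap between consecutive primes, then its length satisfies $p_i - p_{i-1} < k\log^2 p_i \leq k\log^2 p_n$ since $p_i \leq p_n$. If instead $I_i$ is the initial piece $(\frac{p_n}{2}, p_1)$, I would bound its length by the straddling gap $p_1 - p_0 < k\log^2 p_1 \leq k\log^2 p_n$, where $p_0$ is the largest prime not exceeding $\frac{p_n}{2}$; the inequality $p_1 > \frac{p_n}{2} > N$ guarantees the hypothesis applies at $p_1$. Either way $\mathrm{length}(I_i) < k\log^2 p_n$, contradicting the previous step. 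Hence $(\frac{p_n}{2}, p_n)$ must contain at least $[\frac{1}{2k}\frac{p_n}{\log^2 p_n}]$ primes, proving Lemma \ref{lemma_a_2}.

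I would expect the only delicate points to be the correct orientation of the floor inequality (the floor must leave the denominator no larger than the real bound, which it does) and the treatment of the initial subinterval $(\frac{p_n}{2}, p_1)$, which is not itself a prime gap but is controlled by the gap immediately straddling $\frac{p_n}{2}$. The pigeonhole step and the arithmetic simplification to $k\log^2 p_n$ are routine once the decomposition is in place.
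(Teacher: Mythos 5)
Your proposal is correct and takes essentially the same route as the paper, which simply states that the proof of Lemma \ref{lemma_a_2} is identical to that of Lemma \ref{lemma_a_1}: the same decomposition of $(\frac{p_n}{2},p_n)$ into the initial piece plus the prime gaps, the same pigeonhole step producing a subinterval of length at least $k\log^2 p_n$, and the same contradiction with the gap hypothesis. Your write-up is in fact more careful than the paper's (notably in handling the floor inequality and the initial subinterval $(\frac{p_n}{2},p_1)$ via the straddling gap), but it is the same argument.
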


\begin{proof}
The proof is the same as in lemma \ref{lemma_a_1}.
\end{proof}

\begin{proof}[Proof of proposition \ref{prop_a_2}]
Let the constants $k, N$ of proposition \ref{prop_a_2} such that $p_m - p_{m-1}< k \log^2p_m$ is true for every $p_m \geq N$. However, there is such an integer $n_0 > N$ that an interval $(n_0, 2n_0)$ contains primes less than $[\frac{1}{2k}\frac{2n_0}{\log^22n_0}]$. Let $p_{n-1}, p_n$ be such primes that $p_{n-1}<2n_0< p_n$. Then $(\frac{p_n}{2}, p_n)$ contains primes less than $[\frac{1}{2k}\frac{p_n}{\log^2p_n}]$. Since $(n_0, p_n) = (n_0, 2n_0)\cup[2n_0] \cup(2n_0, p_n)$ contains primes less than $[\frac{1}{2k}\frac{2n_0}{\log^22n_0}]$. Furthermore $(\frac{p_n}{2}, p_n)\subset(n_0, p_n)$ since $\frac{p_n}{2} > n_0 > N$ so $(\frac{p_n}{2}, p_n)$ contains primes less than $[\frac{1}{2k}\frac{p_n}{\log^2p_n}]$, contradicting lemma \ref{lemma_a_2}.
\end{proof}

\noindent We would like to note that Cramer's conjecture $p_{n+1}- p_n = O(\ln^2p_n)$ is consistent with the admissible estimate of the lower bound for the number of primes in the interval $(n, 2n), \sim \frac{n}{\log 2n}$, and the hypothesis $\sqrt{p_{n+1}} - \sqrt{p_n} = o(1)$ which has the experimental support \cite{Ribenboim:2004aa}. 
\newline
\newline
It is surprising that it is impossible to obtain the lower bound for the number of primes in $(n, 2n)$ in the classical form $\sim \frac{n}{\log 2n}$ by evaluations of the difference of primes. It is a real fact due to E. Westzynthius, $p_{m+1} - p_m = O (\log p_m)$ is not true. However, after works of P. Erd\"{o}s and R.Rankin it is expected that for any real $\epsilon > 0$ the relation $p_{m+1} - p_m = O(\log^{1+\epsilon}p_m)$ is true (if this is really so?) then the evaluation of the difference between consecutive primes permits to obtain the lower bound as $(\frac{1}{k(\epsilon)})(\frac{n}{log^{1+\epsilon} 2n})$ where $\frac{n}{log^{1+\epsilon} 2n} = O(\frac{n}{\log2n})$ under $\epsilon \rightarrow 0$ while  $k(\epsilon) = O(1)$ is not true. The conjecture $p_{n+1}- p_n = O(log^{1+\epsilon}p_n)$ is consistent both with the hypothesis $\sqrt{p_{n+1}} - \sqrt{p_n} = o(1)$ and with the admissible estimate of the lower bound for the number of primes in $(n, 2n)$.

\begin{proposition}\label{prop_a_3}
There exists a constant $C$ such that for every integer $n > C$ the interval $(n, 2n)$ contains not less than $[\frac{1}{2}(2n)^{0.475}]$ prime numbers. 
\end{proposition}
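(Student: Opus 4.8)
The plan is to replicate the argument of Propositions~\ref{prop_a_1} and~\ref{prop_a_2}, now feeding in the Baker--Harman--Pintz bound of \cite{Baker:2001aa} in place of the square-root and logarithmic gap hypotheses. The key preliminary step is to record this bound in a constant-free form valid for large arguments: \cite{Baker:2001aa} guarantees that for every sufficiently large $x$ the interval $(x - x^{0.525}, x]$ contains a prime. Applying this with $x = p_m$ produces a prime larger than $p_m - p_m^{0.525}$, which must be $p_{m-1}$, so that
\[
p_m - p_{m-1} < p_m^{0.525} \quad\text{for every prime } p_m \geq N,
\]
for some fixed threshold $N$. This is the exact analogue of the gap conditions in Lemmas~\ref{lemma_a_1} and~\ref{lemma_a_2}, with $\frac{2}{k}\sqrt{p_m}$ and $k\log^2 p_m$ replaced by $p_m^{0.525}$, and it is precisely what pins down the leading constant $\frac{1}{2}$ in the conclusion.

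Next I would establish the corresponding lemma: if $p_m - p_{m-1} < p_m^{0.525}$ holds for all primes $p_m \geq N$, then $(\frac{p_n}{2}, p_n)$ contains at least $[\frac{1}{2} p_n^{0.475}]$ primes whenever $\frac{p_n}{2} > N$. The proof is the covering-and-pigeonhole argument of Lemma~\ref{lemma_a_1}. Suppose the interval contained fewer than $[\frac{1}{2} p_n^{0.475}]$ primes and split $(\frac{p_n}{2}, p_n)$ into the maximal open subintervals determined by those primes; their number is then at most $[\frac{1}{2} p_n^{0.475}]$, so one of them has length at least
\[
\frac{p_n/2}{[\frac{1}{2} p_n^{0.475}]} \geq \frac{p_n/2}{\frac{1}{2} p_n^{0.475}} = p_n^{0.525}.
\]
Such a subinterval is bounded by consecutive primes $p_{i-1} < p_i \leq p_n$ (the two boundary pieces being handled, exactly as in Lemma~\ref{lemma_a_1}, by comparison with the nearest primes near $\frac{p_n}{2}$ and $p_n$), giving $p_i - p_{i-1} > p_n^{0.525} \geq p_i^{0.525}$, in contradiction with the gap inequality; the floor makes the first inequality strict.

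The proposition then follows by the transfer argument of Propositions~\ref{prop_a_1} and~\ref{prop_a_2}. Fix $C \geq N$ and suppose, for contradiction, that some integer $n_0 > C$ were such that $(n_0, 2n_0)$ contains fewer than $[\frac{1}{2}(2n_0)^{0.475}]$ primes; choose consecutive primes $p_{n-1} < 2n_0 < p_n$. Since $2n_0$ is composite and $(2n_0, p_n) \subset (p_{n-1}, p_n)$ contains no prime, the interval $(n_0, p_n)$ has exactly the primes of $(n_0, 2n_0)$, hence fewer than $[\frac{1}{2}(2n_0)^{0.475}] \leq [\frac{1}{2} p_n^{0.475}]$. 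As $\frac{p_n}{2} > n_0 > N$ forces $(\frac{p_n}{2}, p_n) \subset (n_0, p_n)$, the smaller interval would contain fewer than $[\frac{1}{2} p_n^{0.475}]$ primes, contradicting the lemma.

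I expect the one genuine difficulty to be this first step: extracting the gap inequality with leading constant $1$, which is \emph{essential} rather than cosmetic. If \cite{Baker:2001aa} is quoted only as $p_m - p_{m-1} \leq A\,p_m^{0.525}$ with an unspecified $A$, the lemma delivers merely $[\frac{1}{2A} p_n^{0.475}]$ primes; and attempting to absorb $A$ by enlarging the exponent to $0.525 + \epsilon$ replaces the count by $[\frac{1}{2} p_n^{0.475 - \epsilon}]$, which falls short of the target exponent $0.475$. The stated constant $\frac{1}{2}$ therefore genuinely requires the constant-free interval statement of \cite{Baker:2001aa}, from which $p_m - p_{m-1} < p_m^{0.525}$ follows for all large $p_m$. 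Once that inequality is secured, everything else is a mechanical transcription of the proofs of Lemma~\ref{lemma_a_1} and Proposition~\ref{prop_a_1}.
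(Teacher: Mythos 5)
Your proposal is correct and follows essentially the same route as the paper: extract the gap bound $p_m - p_{m-1} < p_m^{0.525}$ from the constant-free interval form of Theorem~1 of \cite{Baker:2001aa}, prove the half-interval lemma by the pigeonhole argument of Lemma~\ref{lemma_a_1}, and transfer to $(n,2n)$ as in Proposition~\ref{prop_a_2}. The paper compresses all of this into one sentence plus the phrase ``the proof goes like in proposition~\ref{prop_a_2}''; you have merely written out the steps it leaves implicit, including the (correct) observation that the exponent $0.475$ with constant $\frac{1}{2}$ hinges on the interval statement carrying no multiplicative constant.
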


\begin{proof}
According to \cite{Baker:2001aa} ``Theorem 1. For all $x > x_0$, the interval $[x - x^{0. 525}, x]$ contains a prime number. With enough effort, the value of $x_0$ could be determined effectively".  We have that if $p_m > C = x_0$ then $p_m- p_{m-1}< p_m^{0.525}$. Further the proof goes like in proposition \ref{prop_a_2}.
\end{proof}

\noindent Nowadays the estimate $\sim n^{0.475}$ of the lower bound for the number of primes in $(n, 2n)$ is obtained by the evaluations of the difference between consecutive primes is the best available result under such an approach.

\section{Discussion and Conclusions}

We have shown that by the evaluations of the difference between consecutive primes one can obtain the estimates of the lower bound for the number of primes in an interval $(n, 2n)$. Nowadays the best available result under such an approach is $[\frac{1}{2}(2n)^{0.475}]$.
\newline
\newline
Our results permit us to conclude that the relations $p_{n+1} - p_n = O(\ln^2 p_n)$ (Cramer's conjecture) and $p_{n+1} - p_n = O(\log^{1+\epsilon}p_n)$ (ultra Cramer's conjecture) have real reasons to be valid as they are consistent both with the admissible estimate of the lower bound for a number of primes in $(n, 2n) \sim \frac{n}{\log2n}$ and with the conjecture $\sqrt{p_{n+1}} - \sqrt{p_n} = o(1)$ which has the experimental support \cite{Ribenboim:2004aa} and do not conflict with the results of the works of E. Westzynthius, P. Erd\"{o}s and R.Rankin.

\iffalse 
\fi

\bibliography{references}
\bibliographystyle{plain}

\end{document}